\newtheorem{lemma}{Lemma}
\newtheorem{theorem}{Theorem}
\newtheorem{definition}{Definition}
\newtheorem{claim}{Claim}
\newcommand{\ds}{\displaystyle}
\newcommand{\dss}{\displaystyle\sum}
\newcommand{\lp}{\left(}
\newcommand{\rp}{\right)}
\title{On the size of planar graphs with positive Lin-Lu-Yau Ricci curvature}
\author{
Linyuan Lu
\thanks{University of South Carolina, Columbia, SC 29208,
({\tt lu@math.sc.edu}). This author was supported in part by NSF
grant DMS-1600811.} \and
Zhiyu Wang \thanks{Georgia Institute of Technology, Atlanta, GA, 30332, (zwang672@gatech.edu).} 
}
\begin{document}
\maketitle

\begin{abstract}
We show that if a planar graph $G$ with minimum degree at least $3$ has positive Lin-Lu-Yau Ricci curvature on every edge, then $\Delta(G)\leq 17$, which then implies that $G$ is finite. This is an analogue of a result of DeVos and Mohar [{\em Trans. Amer. Math. Soc., 2007}] on the size of planar graphs with positive combinatorial curvature.

\end{abstract}

\section{Introduction}\label{sec:ricci}

Let $G$ be a simple connected planar graph that is $2$-cell embedded in the sphere, and let $V, E, F$ be the set of vertices, edges, and faces of $G$. Given $v \in V$, let $\deg(v)$ denote the number of edges containing $v$, and let $F(v)$ denote the set of faces that touch $v$. Given a face $\sigma \in F$, let $|\sigma|$ denote the \emph{size} of $\sigma$, which is the number of edges bounding $\sigma$.
For each $v\in V$, the \emph{combinatorial curvature} at $v$, denoted by $\phi(v)$, is defined by $\phi(v) = 1 - \frac{\deg(v)}{2} + \sum_{\sigma \in F(v)} \frac{1}{|\sigma|}$. A graph $G$ is said to have positive combinatorial curvature (or \textit{positively curved}) if $\phi(v) > 0$ for all $v\in V$.
Higuchi \cite{Higuchi01} conjectured that that if $G$ is a simple connected positively curved graph embedded into a $2$-sphere and $\delta(G)\geq 3$, then $G$ is finite. Higuchi's conjecture was verified by Sun and Yu \cite{Sun-Yu04} for cubic planar graphs and resolved by DeVos and Mohar \cite{DeVos-Mohar07}. In particular, DeVos and Mohar showed the following theorem.

\begin{theorem}[\cite{DeVos-Mohar07}]\label{thm:DM}
Suppose $G$ is a connected simple graph embedded into a $2$-dimensional topological manifold $\Omega$ without boundary and $G$ has minimum degree at least $3$. If $G$ has positive combinatorial curvature, then it is finite and $\Omega$ is homeomorphic to either a $2$-sphere or a projective plane. Moreover, if $G$ is not a prism, an antiprism, or one of their projective plane analogues, then $|V(G)| \leq 3444$.
\end{theorem}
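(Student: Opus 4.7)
The starting point is the combinatorial Gauss--Bonnet identity
\[
\sum_{v \in V} \phi(v) \;=\; \chi(\Omega),
\]
obtained from Euler's formula $|V|-|E|+|F| = \chi(\Omega)$ combined with the double-counting relations $2|E| = \sum_v \deg(v) = \sum_{\sigma \in F} |\sigma|$. The hypothesis $\phi(v) > 0$ at every vertex forces $\chi(\Omega) > 0$, so $\chi(\Omega) \in \{1,2\}$ and $\Omega$ is homeomorphic to either $S^2$ or $\mathbb{RP}^2$. This immediately pins down the topology, but finiteness is not automatic, since a single $\phi(v)$ can be an arbitrarily small positive rational.

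To get finiteness and the explicit bound, I would first tabulate all admissible vertex \emph{types} $(d; f_1, \dots, f_d)$ with $d \geq 3$ and $1 - d/2 + \sum_i 1/f_i > 0$. For $d \geq 7$ every incident face is forced to be a triangle and $\phi(v)$ is bounded below by a uniform constant; for $d \in \{5, 6\}$ the constraints are weaker but still give a uniform positive floor on $\phi(v)$ once one excludes a finite list of patterns. The delicate regime is $d \in \{3,4\}$, where one face can be made arbitrarily large: for instance, a $(3,3,3,k)$-vertex has curvature $1/k$, and $(3,b,c)$-vertices can achieve curvature as small as $1/2 - 1/3 - 1/b - 1/c$ with one of $b,c$ unbounded. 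The key structural point to establish is that such small-curvature vertices cannot cluster: each one must be adjacent, across a short face, to vertices carrying a definite surplus of curvature.

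The core of the proof, and the main obstacle, is a discharging scheme. Starting from the initial charge $\phi(v)$ on each vertex, I would design local rules that send charge from vertices of high curvature to low-curvature neighbors either along edges or around the boundary of each small face, preserving the total $\chi(\Omega)$. The goal is to exhibit a universal $\epsilon > 0$ such that every vertex ends with final charge at least $\epsilon$; then
\[
|V(G)| \;\leq\; \frac{\chi(\Omega)}{\epsilon} \;\leq\; \frac{2}{\epsilon},
\]
and pushing $\epsilon$ as close to $2/3444$ as possible yields the stated bound. The rules must be calibrated so that the only global configurations in which \emph{some} vertex fails to accumulate charge $\geq \epsilon$ are the exceptional families listed in the theorem (prisms, antiprisms, and their $\mathbb{RP}^2$-quotients), which are precisely the extremal examples where every vertex has the same small curvature and the discharging has no surplus to draw from.

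I expect the bulk of the work to live in this last step: one must run through short cycles of vertex types around each face, verify that the redistribution respects all adjacencies, and eliminate each sporadic obstruction either by showing it forces one of the exceptional graphs or by refining the discharging rule. The sphere/projective-plane dichotomy and the vertex-type enumeration are essentially bookkeeping; the combinatorial bite is in showing that, outside the exceptional list, enough surplus can be harvested locally to lift every $\phi(v)$ above a fixed threshold.
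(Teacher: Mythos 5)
This theorem is imported by the paper from DeVos and Mohar; the paper contains no proof of it, so your proposal has to stand on its own. As written it is circular at its foundation: the Gauss--Bonnet identity $\sum_{v\in V}\phi(v)=\chi(\Omega)$ is derived from Euler's formula $|V|-|E|+|F|=\chi(\Omega)$, which presupposes that $V$, $E$, $F$ are finite. Finiteness is precisely what has to be proved (this is Higuchi's conjecture), so you cannot invoke that identity to conclude $\chi(\Omega)>0$, nor can you run a discharging argument whose punchline is $|V(G)|\le \chi(\Omega)/\epsilon$, since ``total charge equals $\chi(\Omega)$'' is again the finite Euler formula. The actual content of DeVos and Mohar's work --- visible in their title, ``An analogue of the Descartes--Euler formula for infinite graphs'' --- is a Gauss--Bonnet-type \emph{inequality} valid for infinite locally finite graphs embedded in surfaces; it is that inequality, absent from your proposal, which rules out an infinite positively curved graph with $\delta\ge 3$ and pins down $\Omega$.

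The vertex-type bookkeeping is also wrong in a way that matters. A vertex of degree $d$ whose incident faces are all triangles has $\phi(v)=1-d/6$, so positive combinatorial curvature already forces $\deg(v)\le 5$: there are \emph{no} admissible types with $d\ge 6$, rather than types ``forced to be triangles and bounded below by a uniform constant'' for $d\ge 7$. The delicate types are $(3;3,3,k)$, $(4;3,3,3,k)$, $(3;3,b,c)$ and their relatives, whose curvature tends to $0$ as the large face grows; these produce the prism and antiprism exceptions and the extremal constant $1/1722$ (a $(3;3,7,41)$ vertex), whence $2\cdot 1722=3444$. Finally, the discharging scheme that would carry the entire weight of the quantitative bound is left as a declaration of intent (``I would design local rules\ldots''), so even granting finiteness the proposal does not establish the stated bound. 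The strategy of harvesting curvature surplus near small-curvature vertices is in the right spirit for the $3444$ bound, but none of it is executed.
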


The  minimum possible constants for $|V(G)|$ in Theorem \ref{thm:DM} for $G$ embedded in a $2$-sphere and projective plane respectively was studied in \cite{RBK05, Nicholson-Sneddon11, Chen-Chen08, Zhang08, Oh17}. In particular, Nicholson and Sneddon \cite{Nicholson-Sneddon11} gave examples of positively curved graphs with $208$ vertices embedded into a $2$-sphere. The upper bound on $|V(G)|$ was recently settled by Ghidelli \cite{Ghidelli17}.

In this paper, we study the analogous version of the Higuchi's conjecture in the context of the Lin-Lu-Yau Ricci curvature (which will be defined later). 
The definition of the (non-combinatorial) Ricci curvature on metric spaces first came from the Bakry and Emery notation \cite{Bakry-Emery} who defined the ``lower Ricci curvature bound" through the heat semigroup $(P_t)_{t\geq 0}$ on a metric measure space. Ollivier \cite{Ollivier} defined the coarse Ricci curvature of metric spaces in terms of how much small balls are closer (in Wasserstein transportation distance) then their centers are. This notion of coarse Ricci curvature on discrete spaces was also made explicit in the Ph.D. thesis of Sammer \cite{Sammer}.
Under the assumption of positive curvature in a metric space, Gaussian-like or Poisson-like concentration inequalities can be obtained. Such concentration inequalities have been investigated in \cite{Joulin} for time-continuous Markov jump processes and in \cite{Ollivier, JO} in metric spaces.
The first definition of Ricci curvature on graphs was introduced by Chung and Yau in \cite{Chung-Yau95}. For a more general definition of Ricci curvature, Lin and Yau \cite{LY} gave a generalization of the lower Ricci curvature bound in the framework of graphs. In \cite{LLY}, Lin, Lu, and Yau modified Ollivier's Ricci curvature \cite{Ollivier} and defined a new variant of Ricci curvature on graphs, which does not depend on the idleness of the random walk. 

In this paper, we adopt the same notation as in \cite{LLY}.
Given a vertex $v\in V(G)$, we use $\Gamma(v)$ to denote the neighborhood of $v$, i.e., $\Gamma(v) = \{u: vu\in E(G)\}$. More generally, given a subset of vertices $S\subseteq V(G)$, $\Gamma(S) = \{u \in V(G): us\in E(G) \textrm{ for some } s \in S\}$. Given $x,y\in V(G)$, define $\Gamma(x,y) = \{u: u\in \Gamma(x)\cap \Gamma(y)\}$, i.e., the common neighbors of $x$ and $y$. 

A probability distribution (over the vertex set $V(G)$) is a mapping $m: V\to [0,1]$ satisfying $\sum_{x\in V} m(x) = 1$. Suppose two probability distributions $m_1$ and $m_2$ have finite support. A {\em coupling} between $m_1$ and $m_2$ is a mapping $A: V\times V \to [0,1]$ with finite support so that 
$$\dss_{y \in V} A(x,y) = m_1(x) \textrm{ and } \dss_{x\in V} A(x,y) = m_2(y).$$
Let $d(x,y)$ be the graph distance between two vertices $x$ and $y$. The \textit{transportation distance} between two probability distributions $m_1$ and $m_2$ is defined as follows:
$$W(m_1, m_2) = \inf_A \dss_{x,y\in V} A(x,y) d(x,y),$$
where the infimum is taken over all coupling $A$ between $m_1$ and $m_2$. By the duality theorem of a linear optimization problem, the transportation distance can also be written as follows:
$$W(m_1, m_2) = \sup_f \dss_{x\in V} f(x) \lp m_1(x)-m_2(x)\rp,$$
where the supremum is taken over all $1$-Lipschitz functions $f$.

A random walk $m$ on $G=(V,E)$ is defined as a family of probability measures $\{m_v(\cdot)\}_{v\in V}$ such that $m_v(u) = 0$ for all $uv \notin E$. It follows that  $m_v(u) \geq 0$ for all $v,u\in V$ and $\sum_{u\in \Gamma(v)} m_v(u) = 1$. The Ricci cuvature $\kappa: \binom{V(G)}{2} \to \mathbb{R}$ of $G$ can then be defined as follows:

\begin{definition}
Given $G=(V,E)$, a random walk $m = \{m_v(\cdot)\}_{v\in V}$ on $G$ and two vertices $x,y\in V$,
$$\kappa(x,y) = 1 - \frac{W(m_x, m_y)}{d(x,y)}.$$
Moreover, we say a graph $G$ equipped with a random walk $m$ has Ricci curvature at least $\kappa_0$ if $\kappa(x,y) \geq \kappa_0$ for all $x,y \in V$.
\end{definition}

For $0\leq \alpha < 1$, the {\em $\alpha$-lazy random walk} $m_x^{\alpha}$ (for any vertex $x$), is defined as 
\[
m_x^{\alpha}(v) = \begin{cases} 
                        \alpha & \textrm{ if $v=x$,}\\
                        (1-\alpha)/d(x) &\textrm{ if $v\in \Gamma(x)$,}\\
                        0 & \textrm{ otherwise.}
                    \end{cases}
\]
In \cite{LLY}, Lin, Lu and Yao defined the Ricci curvature of graphs based on the $\alpha$-lazy random walk as $\alpha$ goes to $1$. More precisely,
for any $x,y \in V$, they defined the $\alpha$-Ricci-curvature $\kappa_{\alpha}(x,y)$ to be 
$$\kappa_{\alpha}(x,y) = 1 - \frac{W(m_x^{\alpha}, m_y^{\alpha})}{d(x,y)}$$ and the Ricci curvaure $\kappa_{\textrm{LLY}}$ of $G$ to be 
\[\kappa_{\textrm{LLY}}(x,y) = \ds\lim_{\alpha \to 1} \frac{\kappa_{\alpha}(x,y)}{(1-\alpha)}.\]
They showed in \cite{LLY} that $\kappa_{\alpha}$ is concave in $\alpha \in [0,1]$ for any two vertices $x,y$. Moreover, 
\[\kappa_{\alpha}(x,y) \leq (1-\alpha) \frac{2}{d(x,y)}.\]
for any $\alpha \in [0,1]$ and any two vertices $x$ and $y$. In particular, this implies the following lemma.

\begin{lemma}\cite{LLY, Ollivier}\label{lem:diam}
If for any edge $xy$, $\kappa_{\textrm{LLY}}(x,y) \geq \kappa_0 > 0$, the the diameter of the graph $G$
$$\textrm{diam}(G) \leq \frac{2}{\kappa_0}.$$
\end{lemma}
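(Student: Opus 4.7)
The plan is to combine two ingredients: the stated upper bound $\kappa_\alpha(x,y) \leq (1-\alpha)\tfrac{2}{d(x,y)}$, and a lower bound that propagates the edge hypothesis to arbitrary pairs via concatenation along a shortest path. First, I would divide the displayed upper bound by $(1-\alpha)$ and let $\alpha \to 1$ to obtain $\kappa_{\textrm{LLY}}(x,y) \leq \tfrac{2}{d(x,y)}$ for every pair $x,y$. Thus it suffices to show the matching lower bound $\kappa_{\textrm{LLY}}(x,y) \geq \kappa_0$ for every pair, because then $\kappa_0 \leq \tfrac{2}{d(x,y)}$ gives $d(x,y) \leq 2/\kappa_0$ uniformly, hence $\mathrm{diam}(G) \leq 2/\kappa_0$.

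For the lower bound, fix vertices $x,y$ at distance $d=d(x,y)$ and choose a shortest path $x=v_0,v_1,\ldots,v_d=y$. The transportation distance $W$ satisfies the triangle inequality (it is a genuine metric on probability measures), so
\[
W(m_x^\alpha, m_y^\alpha) \;\leq\; \sum_{i=0}^{d-1} W(m_{v_i}^\alpha, m_{v_{i+1}}^\alpha) \;=\; \sum_{i=0}^{d-1}\bigl(1-\kappa_\alpha(v_i,v_{i+1})\bigr),
\]
using that each $v_iv_{i+1}$ is an edge so $d(v_i,v_{i+1})=1$. Dividing by $d$ and rearranging yields $\kappa_\alpha(x,y) \geq \tfrac{1}{d}\sum_{i=0}^{d-1}\kappa_\alpha(v_i,v_{i+1})$. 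Dividing by $(1-\alpha)$ and passing to the limit $\alpha\to 1$ gives
\[
\kappa_{\textrm{LLY}}(x,y) \;\geq\; \frac{1}{d}\sum_{i=0}^{d-1}\kappa_{\textrm{LLY}}(v_i,v_{i+1}) \;\geq\; \kappa_0,
\]
by the edge hypothesis. Combining with the upper bound of the previous paragraph completes the proof.

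The only step that requires care is the interchange of the limit $\alpha\to 1$ with the sum/inequality; this is harmless since the sum has finitely many terms and each $\kappa_\alpha(v_i,v_{i+1})/(1-\alpha)$ converges (indeed, the concavity of $\kappa_\alpha$ in $\alpha$ stated above ensures that $\kappa_\alpha/(1-\alpha)$ is monotone, so the limit exists and the inequality passes through term by term). I do not expect any real obstacle beyond correctly invoking the triangle inequality for Wasserstein distance and this routine limiting argument.
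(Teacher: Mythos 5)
Your argument is correct and is essentially the proof the paper intends: the paper derives the lemma from the stated bound $\kappa_{\alpha}(x,y)\leq (1-\alpha)\frac{2}{d(x,y)}$ together with Lemma~\ref{lem:adj-pair}, and your shortest-path/triangle-inequality step is exactly the standard proof of Lemma~\ref{lem:adj-pair}. No issues.
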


Although the Ricci curvature $\kappa_{\textrm{LLY}}(x,y)$ is defined for all pairs $x,y \in V(G)$, it suffices to consider  only $\kappa_{\textrm{LLY}}(x,y)$ for $xy\in E(G)$ due to the following lemma:

\begin{lemma}\cite{LLY, Ollivier}\label{lem:adj-pair}
If $\kappa_{\textrm{LLY}}(x,y) \geq \kappa_0$ for any edge $xy\in E(G)$, then $\kappa_{\textrm{LLY}}(x,y) \geq \kappa_0$ for any pair of vertices $\{x,y\}$.
\end{lemma}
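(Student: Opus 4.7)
The plan is to chain the single-edge bounds along a shortest path using the triangle inequality for Wasserstein distance, then pass to the limit $\alpha\to 1$.

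First I would record two preliminary facts. (i) For any probability measures $m_1,m_2,m_3$ on $V$, the transportation distance satisfies $W(m_1,m_3)\leq W(m_1,m_2)+W(m_2,m_3)$; this is an immediate consequence of gluing two optimal couplings together (or of the dual formulation through $1$-Lipschitz functions). (ii) For fixed $x,y$, the map $\alpha\mapsto \kappa_\alpha(x,y)$ is concave on $[0,1]$ with $\kappa_1(x,y)=0$ (since $m_x^{1}=\delta_x$ forces $W(m_x^1,m_y^1)=d(x,y)$). Combining concavity with $\kappa_1=0$, the function $\alpha\mapsto \kappa_\alpha(x,y)/(1-\alpha)$ is monotonically non-increasing on $[0,1)$, so
\[
\kappa_\alpha(x,y) \;\geq\; (1-\alpha)\,\kappa_{\textrm{LLY}}(x,y)\qquad\text{for all }\alpha\in[0,1).
\]

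Next I would tackle the main claim. Fix arbitrary $x,y\in V(G)$ with $d(x,y)=d$, and pick a shortest path $x=x_0,x_1,\dots,x_d=y$. For each consecutive pair $x_ix_{i+1}\in E(G)$, the hypothesis $\kappa_{\textrm{LLY}}(x_i,x_{i+1})\geq \kappa_0$ combined with the bound from fact (ii) gives
\[
W(m_{x_i}^{\alpha}, m_{x_{i+1}}^{\alpha}) \;=\; 1 - \kappa_\alpha(x_i,x_{i+1}) \;\leq\; 1 - (1-\alpha)\kappa_0.
\]
Applying the triangle inequality from fact (i) along the shortest path,
\[
W(m_x^{\alpha}, m_y^{\alpha}) \;\leq\; \sum_{i=0}^{d-1} W(m_{x_i}^{\alpha}, m_{x_{i+1}}^{\alpha}) \;\leq\; d\bigl(1-(1-\alpha)\kappa_0\bigr),
\]
so that
\[
\kappa_\alpha(x,y) \;=\; 1 - \frac{W(m_x^{\alpha},m_y^{\alpha})}{d} \;\geq\; (1-\alpha)\kappa_0.
\]
Dividing by $1-\alpha$ and letting $\alpha\to 1$ yields $\kappa_{\textrm{LLY}}(x,y)\geq \kappa_0$, as desired.

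The only delicate point is the inequality $\kappa_\alpha(x,y)\geq (1-\alpha)\kappa_{\textrm{LLY}}(x,y)$, which is where we use the concavity of $\kappa_\alpha$ in $\alpha$ (a nontrivial result stated in the paper from \cite{LLY}); everything else is routine once the right telescoping framework is set up. Note that the proof uses the hypothesis only on edges $x_ix_{i+1}$ of the chosen shortest path, so the argument is essentially one-dimensional along geodesics.
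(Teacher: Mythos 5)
The paper itself gives no proof of this lemma (it is cited from \cite{LLY, Ollivier}), so I can only assess your argument on its own terms. Your overall strategy --- chaining the Wasserstein triangle inequality along a shortest path --- is exactly the standard one, and facts about $W$ and the identity $\kappa_1(x,y)=0$ are fine. However, your preliminary fact (ii) is stated backwards, and this is a genuine gap rather than a typo, because the rest of the proof leans on it. If $g(\alpha)=\kappa_\alpha(x,y)$ is concave with $g(1)=0$, then for $\alpha<\beta<1$ concavity gives $g(\beta)\geq \frac{1-\beta}{1-\alpha}g(\alpha)$, i.e.\ $\kappa_\alpha(x,y)/(1-\alpha)$ is monotonically \emph{non-decreasing}, so its limit $\kappa_{\textrm{LLY}}(x,y)$ is a supremum and the correct inequality is $\kappa_\alpha(x,y)\leq(1-\alpha)\kappa_{\textrm{LLY}}(x,y)$ --- the opposite of what you assert. (You can sanity-check this against the paper's own use of $\kappa(x,y)\geq \kappa_0(x,y)=1-W(m_x^0,m_y^0)$ in the proof of Lemma \ref{lem:curvature_max_degree}, which is precisely this inequality at $\alpha=0$, and against the test case $g(\alpha)=1-\alpha^2$, for which $g(\alpha)/(1-\alpha)=1+\alpha$ is increasing.) Consequently the hypothesis $\kappa_{\textrm{LLY}}(x_i,x_{i+1})\geq\kappa_0$ does \emph{not} yield the pointwise bound $W(m_{x_i}^\alpha,m_{x_{i+1}}^\alpha)\leq 1-(1-\alpha)\kappa_0$ for every fixed $\alpha$, and your telescoping step has nothing to telescope.

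The argument is salvageable with a small additional limiting step. Since $\kappa_{\textrm{LLY}}(x_i,x_{i+1})=\lim_{\alpha\to1}\kappa_\alpha(x_i,x_{i+1})/(1-\alpha)\geq\kappa_0$, for any $\varepsilon>0$ and each of the finitely many edges of the chosen geodesic there is a threshold $\alpha_i<1$ beyond which $\kappa_\alpha(x_i,x_{i+1})\geq(1-\alpha)(\kappa_0-\varepsilon)$; taking $\alpha>\max_i\alpha_i$, your triangle-inequality chain gives $W(m_x^\alpha,m_y^\alpha)\leq d\bigl(1-(1-\alpha)(\kappa_0-\varepsilon)\bigr)$, hence $\kappa_\alpha(x,y)/(1-\alpha)\geq\kappa_0-\varepsilon$, and letting $\alpha\to1$ and then $\varepsilon\to0$ finishes the proof. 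With that repair your proof coincides with the standard argument of Ollivier and Lin--Lu--Yau; as written, the key inequality is false.
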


M\"{u}nch and Wojciechowski \cite{MW} gave a limit-free formulation of the Lin-Lu-Yau Ricci curvature using \textit{graph Laplacian}. Given a graph $G = (V,E)$, the combinatorial graph Laplacian $\Delta$ is defined as: 
$$\Delta f(x)=\frac{1}{d(x)}\sum\limits_{y\in \Gamma(x)} (f(y)-f(x)). $$

\begin{theorem}\label{thm:curvature_laplacian}\cite{MW} (Curvature via the Laplacian) Let $G= (V,E)$ be a simple graph and let $x \neq y \in V(G)$. Then 

\begin{equation*}
    \kappa_{\textrm{LLY}}(x,y) = \inf_{\substack{f \in Lip(1)\\ \nabla_{yx}f = 1}} \nabla_{xy} \Delta f, 
\end{equation*}
where $\nabla_{xy}f=\frac{f(x)-f(y)}{d(x,y)}$ and $d$ is graph distance function. 
\end{theorem}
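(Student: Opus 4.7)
The plan is to combine the Kantorovich dual formulation of the Wasserstein distance with an explicit calculation expressing the expectation of any test function against $m_x^\alpha$ in terms of $\Delta f(x)$, and then take $\alpha \to 1$ via a minimax argument.

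By the dual formulation already recalled in the introduction,
\[
W(m_x^\alpha, m_y^\alpha) \;=\; \sup_{f \in \mathrm{Lip}(1)} \sum_{z \in V} f(z)\bigl(m_x^\alpha(z)-m_y^\alpha(z)\bigr).
\]
Unpacking the $\alpha$-lazy random walk and using $\frac{1}{\deg(x)}\sum_{z \in \Gamma(x)} f(z) = f(x) + \Delta f(x)$, one gets the key identity
\[
\sum_z f(z)\,m_x^\alpha(z) \;=\; \alpha f(x) + (1-\alpha)\bigl(f(x) + \Delta f(x)\bigr) \;=\; f(x) + (1-\alpha)\Delta f(x).
\]
Subtracting the analogous expression at $y$, dividing by $d(x,y)$, and substituting $-f$ for $f$ (which preserves $\mathrm{Lip}(1)$ and exchanges $\nabla_{xy}\leftrightarrow \nabla_{yx}$ both for $f$ and, by linearity of $\Delta$, for $\Delta f$) yields the representation
\[
\frac{\kappa_\alpha(x,y)}{1-\alpha} \;=\; \inf_{f \in \mathrm{Lip}(1)} \left[\frac{1-\nabla_{yx}f}{1-\alpha} \;-\; \nabla_{yx}\Delta f\right].
\]

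The easy half of the theorem is now immediate: for any $f \in \mathrm{Lip}(1)$ with $\nabla_{yx}f = 1$ the first bracketed term vanishes for \emph{every} $\alpha \in (0,1)$, so $\kappa_\alpha(x,y)/(1-\alpha) \le -\nabla_{yx}\Delta f = \nabla_{xy}\Delta f$; letting $\alpha\to 1$ and minimizing over such $f$ gives $\kappa_{\mathrm{LLY}}(x,y) \le \inf_{f \in \mathrm{Lip}(1),\,\nabla_{yx}f=1} \nabla_{xy}\Delta f$.

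The matching lower bound is the main obstacle, because it requires exchanging $\lim_{\alpha \to 1}$ and $\inf_f$. My plan is a Sion-type minimax argument: the bracketed objective is linear (hence convex) in $f$ and monotone nondecreasing (hence quasi-concave) in $\alpha$; after normalizing $f(x)=0$, only the finitely many values of $f$ on $\{x,y\}\cup\Gamma(x)\cup\Gamma(y)$ enter the objective, so the relevant $f$'s vary over a compact convex set (with McShane's extension theorem guaranteeing that any such finite-support function lifts to a globally $1$-Lipschitz $f$). Swapping the inf and sup, the inner supremum over $\alpha \in (0,1)$ is $+\infty$ unless $\nabla_{yx}f = 1$, in which case it equals $\nabla_{xy}\Delta f$. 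Combined with the fact that $\kappa_\alpha(x,y)/(1-\alpha)$ is monotone in $\alpha$ (by concavity of $\alpha\mapsto\kappa_\alpha$ cited from \cite{LLY}, together with $\kappa_1(x,y) = 0$), so that $\kappa_{\mathrm{LLY}}(x,y) = \sup_{\alpha \in (0,1)} \kappa_\alpha/(1-\alpha)$, this produces the reverse inequality. The delicate technical point is verifying the hypotheses of the minimax theorem in this concrete setting and building the compact feasible set carefully; everything else reduces to the duality computation displayed above.
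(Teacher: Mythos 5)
The paper itself offers no proof of this statement: it is quoted verbatim from M\"unch--Wojciechowski \cite{MW}, so there is no in-paper argument to compare against. Judged on its own, your proposal is correct. The duality computation is the right starting point and your key identity $\sum_z f(z)m_x^{\alpha}(z)=f(x)+(1-\alpha)\Delta f(x)$ is exactly the bridge between the Wasserstein dual and the Laplacian; the resulting representation $\kappa_{\alpha}(x,y)/(1-\alpha)=\inf_{f\in \mathrm{Lip}(1)}\bigl[\tfrac{1-\nabla_{yx}f}{1-\alpha}-\nabla_{yx}\Delta f\bigr]$ checks out, as does the easy inequality. For the hard direction, the hypotheses of Sion's theorem that you defer do in fact hold in this setting: after normalizing $f(x)=0$ and restricting to the finite set $S=\{x,y\}\cup\Gamma(x)\cup\Gamma(y)$ (legitimate since the objective depends only on $f|_S$ and McShane extension recovers global $1$-Lipschitz functions from functions $1$-Lipschitz for $d|_S$), the feasible set is compact and convex in $\mathbb{R}^{|S|}$, the objective is affine and continuous in $f$, and since $1-\nabla_{yx}f\ge 0$ for $f\in \mathrm{Lip}(1)$ it is nondecreasing, hence quasi-concave and continuous, in $\alpha\in(0,1)$; together with $\kappa_{\mathrm{LLY}}=\sup_{\alpha}\kappa_{\alpha}/(1-\alpha)$ (which follows from concavity of $\alpha\mapsto\kappa_{\alpha}$ and $\kappa_{1}=0$, as you note), the swap closes the argument. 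Two small remarks: the argument implicitly needs $G$ locally finite so that $S$ is finite and $\Delta f$ is defined, which is harmless here; and the original proof in \cite{MW} (and the related analysis in \cite{BCLMP18}) reaches the same conclusion without a minimax theorem, by taking optimal Kantorovich potentials $f_{\alpha}$ for $\alpha\to 1$ and extracting a convergent subsequence on $S$ --- the blow-up of the coefficient $\tfrac{1}{1-\alpha}$ forces the limit to satisfy $\nabla_{yx}f=1$. That compactness route is slightly more elementary; your minimax route is equally valid and arguably cleaner to state.
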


In this note, we first show that a positively curved planar graph $G$ with $\delta(G)\geq 3$ has bounded maximum degree. 

\begin{theorem}\label{thm:bounded-degree}
Let $G$ be a simple positively curved planar graph $G$ with $\delta(G) \geq 3$. Then $\Delta(G) \leq 17$.
\end{theorem}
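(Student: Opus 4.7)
The plan is to prove the contrapositive: if $\Delta(G)\ge 18$, then some edge of $G$ has $\kappa_{\textrm{LLY}}\le 0$. The workhorse is Theorem \ref{thm:curvature_laplacian}, which lets us bound $\kappa_{\textrm{LLY}}(x,y)$ from above by $\nabla_{xy}\Delta f$ for any $1$-Lipschitz $f$ satisfying $\nabla_{yx}f=1$, so the whole problem reduces to choosing a good test function at each edge incident to a hypothetical high-degree vertex.

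For a generic edge $xy$ with $d_x=\deg(x)\ge d_y=\deg(y)$ and $t=|\Gamma(x,y)|$, I would use the test function $f(x)=0$, $f(y)=1$, $f\equiv 1$ on $\Gamma(x)\cap\Gamma(y)$, $f\equiv -1$ on $\Gamma(x)\setminus(\Gamma(y)\cup\{y\})$, $f\equiv 2$ on $\Gamma(y)\setminus(\Gamma(x)\cup\{x\})$, and extended $1$-Lipschitzly elsewhere (e.g.\ via a truncated distance from a suitable reference set). When these prescribed values are actually compatible with $1$-Lipschitzness on $\Gamma(x)\cup\Gamma(y)$, direct computation in the Laplacian formula yields
$$\kappa_{\textrm{LLY}}(x,y)\ \le\ \frac{2+2t}{d_x}+\frac{2+t}{d_y}-2.$$
This bound is non-positive for many triples $(d_y,t,d_x)$; solving $\frac{2+2t}{d_x}+\frac{2+t}{d_y}\le 2$ with $t$ at its maximum $d_y-1$ shows, for instance, that it forces $\kappa_{\textrm{LLY}}(x,y)\le 0$ at $(d_y,d_x)=(3,9)$ and at $(7,17)$. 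Now suppose $\Delta(G)=d\ge 18$ and let $x$ be a max-degree vertex with neighbors $y_1,\dots,y_d$ in the cyclic order given by the embedding. Planarity of $G[\{x\}\cup\Gamma(x)]$ combined with Euler's formula gives $|E(G[\Gamma(x)])|\le 2d-3$, and hence $\sum_i t_{xy_i}\le 4d-6$; together with the global Euler bound $\sum_v(6-\deg(v))\ge 12$ and a local discharging step around $x$, this should pin down a neighbor $y_i$ whose parameters $(\deg(y_i),t_{xy_i})$ land in the range where the upper bound above yields $\kappa_{\textrm{LLY}}(x,y_i)\le 0$, contradicting positive curvature.

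The main obstacle is twofold. First, the prescribed $f$ is genuinely $1$-Lipschitz on $\Gamma(x)\cup\Gamma(y)$ only when no common neighbor of $x,y$ is adjacent to a non-common neighbor of $x$, and similarly on the $y$-side; triangular faces incident to $xy$ create exactly these bad adjacencies, forcing local modifications of $f$ that weaken the displayed bound and require a case analysis on the face sizes at each edge $xy_i$ at $x$ and at $y_i$. Second, a naive case analysis with this bound just barely allows one extremal local configuration ($d_y=8$, $t=7$, $d_x=18$) to slip through with curvature near zero; extracting the sharp constant $17$ requires either a tighter test function tailored to this configuration or a direct planarity/discharging argument excluding it from appearing around a max-degree vertex. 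Managing both of these refinements carefully is where the heart of the proof lies.
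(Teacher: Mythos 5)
Your overall strategy---bounding $\kappa_{\textrm{LLY}}(x,y)$ from above via Theorem \ref{thm:curvature_laplacian} with an explicit test function on edges out of a maximum-degree vertex---is the same starting point as the paper's, but there is a genuine gap at the central step. The function you prescribe ($f\equiv -1$ on $\Gamma(x)\setminus(\Gamma(y)\cup\{y\})$ and $f\equiv 2$ on $\Gamma(y)\setminus(\Gamma(x)\cup\{x\})$) fails to be $1$-Lipschitz whenever a vertex of the first set is adjacent to a vertex of the second (an $f$-gap of $3$ at distance $1$) or to a common neighbor of $x$ and $y$ (a gap of $2$ at distance $1$). These adjacencies are not an exceptional nuisance to be patched later; they are the generic situation in a planar neighborhood, so the displayed bound $\kappa\le\frac{2+2t}{d_x}+\frac{2+t}{d_y}-2$ is unjustified as stated. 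The paper's Lemma \ref{lem:y_prop} is precisely the repaired version of this computation: it assigns $-1$ only on $\Gamma(x)\setminus(\Gamma(S)\cup\Gamma(y))$ for a chosen $S\subseteq\Gamma(y)\setminus\{x\}$ and uses values only in $\{-1,0,1\}$, and the price of the repair is that the conclusion is no longer a closed-form curvature bound but a lower bound on $|\Gamma(S)\cap\Gamma(x)|$, i.e.\ a statement that the second neighborhood of $y$ must re-enter $\Gamma(x)$ in many places.

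Converting that into a contradiction with planarity is where essentially all of the paper's work lies: it introduces the $\ell$-arc/$\ell$-cap structure on the cyclic order of $\Gamma(x)$ and runs an induction (Claims \ref{cl:no_close_triangle}--\ref{cl:no_large_closure}) excluding caps of every length outside $\{1,2,4\}$, then shows $\Gamma(x)$ would have to be independent yet contain a $2$-cap. Your substitute for this---a ``local discharging step around $x$'' based on $\sum_i t_{xy_i}\le 4d-6$---is not worked out, and you yourself concede that even taking your bound at face value the configuration $(d_y,t,d_x)=(8,7,18)$ survives it (indeed it gives $\frac{1}{72}>0$). So both the analytic step (Lipschitzness of the test function) and the combinatorial step (excluding the surviving local configurations) are missing; the proposal correctly identifies the tools but stops where the actual proof begins.
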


The bound $\Delta(G)\leq 17$ is almost sharp as there exists a positively curved planar graph on $17$ vertices with minimum degree at least $3$ and maximum degree $16$ (see Figure \ref{fig:extremal16}). We guess that the maximum degree bound in Theorem \ref{thm:bounded-degree} could be improved to $16$.

\vspace*{-1.6cm}

\begin{figure}[htb]
\begin{center}

\tikzstyle{every node} = [circle, draw, fill=black, inner sep = 0.5pt, minimum width= 1pt]

\begin{tikzpicture}[scale=2]

    \pgfmathsetmacro\unitangle{360/16};
    \pgfmathsetmacro\offset{1};
    \pgfmathsetmacro\edge{1};
    \node (z) at (0,0) {};

    \foreach \j in {0,...,16}{
    		\node (y{\j}) at (\j * \unitangle: \offset) {};
    	    \draw (z) --++ (\j * \unitangle:  \edge) node (y{\j}) {};
    }

	\foreach \j in {0,...,16}{
		\pgfmathsetmacro\next{int(Mod(int(\j+1),16))};
		\draw (y{\j}) --++ (y{\next});
				
		\pgfmathsetmacro\jumpb{int(Mod(int(\j+2),16))};
		\pgfmathsetmacro\jumpa{int(Mod(int(\j+1),16))};
		\pgfmathsetmacro\jumpd{int(Mod(int(\j+4),16))};

		\ifthenelse{\intcalcMod{\j}{2}=0}
	    	{\draw (y{\j}) --++ (y{\jumpb}) --cycle;}
	    	{}
	    
	    \ifthenelse{\intcalcMod{\j}{4}=0}
	    	{\draw (y{\j}) --++ (y{\jumpd}) --cycle;}
	    	{}
	}
\end{tikzpicture}
\end{center}

\vspace*{-2.2cm}

\caption{A planar graph on $16$ vertices with positive Ricci curvature}
\label{fig:extremal16}
\end{figure}
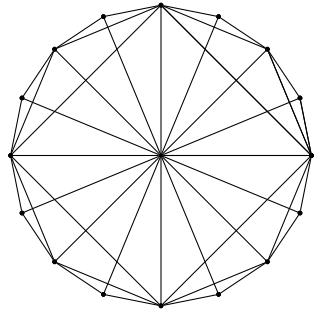

Since a positively curved graph with bounded degree also has bounded diameter by Lemma \ref{lem:diam}. It then follows that analogous to the case of combinatorial curvature, a simple positively (LLY)-curved planar graph has bounded number of vertices. 

\begin{theorem}\label{thm:min-degree3}
If $G$ is a simple positively curved planar graph with minimum degree at least $3$, then $G$ is finite. In particular, $|V(G)| \leq 17^{544}$.
\end{theorem}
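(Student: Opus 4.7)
The plan is to combine Theorem \ref{thm:bounded-degree} with Lemma \ref{lem:diam} and then translate the bounds on degree and diameter into a bound on $|V(G)|$ using a standard breadth-first-search counting argument.

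First, by Theorem \ref{thm:bounded-degree} we have $\Delta(G) \leq 17$, and by hypothesis $\delta(G) \geq 3$, so every edge $xy \in E(G)$ satisfies $3 \leq d(x), d(y) \leq 17$. To invoke Lemma \ref{lem:diam}, I need a uniform positive lower bound $\kappa_0$ on $\kappa_{\textrm{LLY}}(x,y)$ across all edges. Without such a uniform bound, merely knowing $\kappa_{\textrm{LLY}}(x,y) > 0$ on every edge is not enough to control the diameter, since a priori the infimum over the (possibly infinite) edge set could be zero. So the heart of the argument is to show that $\kappa_{\textrm{LLY}}(x,y)$, when positive, is bounded away from zero by a constant depending only on $\Delta$.

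For this, I would use the Münch--Wojciechowski characterization from Theorem \ref{thm:curvature_laplacian}. The objective $\nabla_{xy}\Delta f = \Delta f(x) - \Delta f(y)$ depends only on the values of $f$ on the finite set $S=\{x,y\}\cup\Gamma(x)\cup\Gamma(y)$, so we may restrict the optimization to a finite linear program in the variables $\{f(u): u \in S\}$, subject to the Lipschitz constraints $f(u)-f(v)\leq d(u,v)$ and the normalization $f(x)-f(y)=1$. The constraint matrix of the Lipschitz inequalities is an incidence matrix and thus totally unimodular, while the right-hand sides $d(u,v)$ are integers, so the feasible polytope has integer extreme points. Consequently the infimum is attained at an integer-valued $f$, at which $\Delta f(x)$ is a multiple of $1/d(x)$ and $\Delta f(y)$ is a multiple of $1/d(y)$. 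Hence $\kappa_{\textrm{LLY}}(x,y)$ is a rational number with denominator dividing $\mathrm{lcm}(d(x),d(y))$, and this lcm is at most $\mathrm{lcm}(16,17)=272$ under our degree constraints. Therefore if $\kappa_{\textrm{LLY}}(x,y)>0$ then $\kappa_{\textrm{LLY}}(x,y) \geq \frac{1}{272}$, giving the uniform bound $\kappa_0 \geq \frac{1}{272}$.

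Now Lemma \ref{lem:diam} yields $\mathrm{diam}(G) \leq 2/\kappa_0 \leq 544$. Fixing any vertex $v \in V(G)$, the graph is contained in the ball of radius $544$ around $v$, so by the degree bound
\[
|V(G)| \leq 1 + \sum_{i=1}^{544} 17 \cdot 16^{i-1} \leq 17^{544},
\]
and in particular $G$ is finite. The main obstacle is the middle step: one must argue carefully that the Münch--Wojciechowski LP has integral extreme points (total unimodularity of the incidence matrix of Lipschitz constraints) and that the reduction to a finite LP on $S$ is valid even when $G$ is infinite; once this is in hand, the denominator bound and the rest of the deduction are routine.
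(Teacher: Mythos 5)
Your proposal is correct, and its skeleton (degree bound $\Rightarrow$ uniform curvature lower bound $\tfrac{1}{272}$ $\Rightarrow$ diameter bound $544$ $\Rightarrow$ ball-counting) is exactly the paper's; the difference lies in how the crucial middle step, the uniform lower bound on $\kappa_{\textrm{LLY}}$ over all edges, is established. The paper proves this as Lemma \ref{lem:curvature_max_degree}: it invokes the inequality $\kappa_{\textrm{LLY}}(x,y)\geq \kappa_0(x,y)=1-W(m_x^0,m_y^0)$ together with the integer-valuedness of an optimal Kantorovich potential (Lemma \ref{lem:integer-valued}, cited from Bourne et al.), and observes that $m_x^0-m_y^0$ takes values in $\frac{1}{d(x)d(y)}\mathbb{Z}$, so a positive curvature is at least $\frac{1}{d(x)d(y)}$. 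You instead work directly with the limit-free formulation of Theorem \ref{thm:curvature_laplacian}, reduce to a finite LP on $\{x,y\}\cup\Gamma(x)\cup\Gamma(y)$ (the McShane extension justifies this restriction even for infinite $G$), and deduce integrality of an optimal potential from total unimodularity of the difference-constraint matrix; this yields $\kappa_{\textrm{LLY}}(x,y)\in\frac{1}{\mathrm{lcm}(d(x),d(y))}\mathbb{Z}$ and hence the same bound $\frac{1}{272}$. Your route is self-contained where the paper outsources the integrality to a citation, and it has the minor advantage of bypassing the inequality $\kappa_{\textrm{LLY}}\geq\kappa_0$ entirely; indeed your $\mathrm{lcm}$ bookkeeping is arguably cleaner than the paper's passage from $\frac{1}{d(x)d(y)}$ to $\frac{1}{\Delta(\Delta-1)}$, which as literally written presumes $d(x)d(y)\leq\Delta(\Delta-1)$. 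The price is that you must (and do) flag the two technical points --- validity of the restriction to a finite LP and integrality of its extreme points --- which the cited lemma handles for free.
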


A natural interesting question is to improve the upper bound on the number of the vertices of a positively curved planar graph. We also remark that Theorem \ref{thm:min-degree3} is no longer true if we drop the the condition $\delta(G) \geq 3$. Indeed, Figure \ref{fig:infinite} shows two positively curved infinite planar graphs whose minimum degree is $2$. The dashed line denotes an edge that can either be added into the graph or left out. Another interesting question would be to classify all positively curved infinite planar graphs.

\begin{figure}[htb]
	\begin{center}
        \begin{minipage}{.3\textwidth}
        		\resizebox{6cm}{!}{\begin{tikzpicture}[scale=1]
\usetikzlibrary{calc}

\tikzset{main style/.style={circle, draw, fill=black, inner sep = 1pt, minimum width= 1pt}}

    \node[main style] (a) at (0,1) {};
    \node[main style] (b) at (0,-1) {};
    
    \draw[dashed] (a) -- (b);
    
    \foreach \j in {1,...,4}{
    		\node[main style] (x{\j}) at (\j-0.5,0) {};
        	\node[main style] (y{\j}) at (-\j+0.5,0) {};
    	    \draw (a) -- (x{\j});
    	    \draw (a) -- (y{\j});
    	    \draw (b) -- (x{\j});
    	    \draw (b) -- (y{\j}); 
    }

    \path (x{2}) -- node[auto=false]{\ldots} (x{3});
    \path (y{2}) -- node[auto=false]{\ldots} (y{3});

\end{tikzpicture}}
        \end{minipage}
            \hspace{2cm}
        \begin{minipage}{.3\textwidth}
        		\resizebox{4.5cm}{!}{\begin{tikzpicture}[scale=1]

\tikzset{main style/.style={circle, draw, fill=black, inner sep = 0.8pt, minimum width= 0.8pt}}

    \pgfmathsetmacro\offset{0.5};

    \node[main style] (a) at (90:  \offset) {};
    \node[main style] (b) at (210: \offset) {};
    \node[main style] (c) at (-30: \offset) {};
    
    \draw (b) -- (a) -- (c);
    \draw[dashed] (c) -- (b);

    \foreach \j in {1,...,4}{
    		\node[main style] (x{\j}) at (30: \j * \offset) {};
        	\node[main style] (y{\j}) at (145:\j * \offset) {};
        	\node[main style] (z{\j}) at (-90:\j * \offset) {};

    	    \draw (a) -- (x{\j});
    	    \draw (c) -- (x{\j});
    	    \draw (a) -- (y{\j});
    	    \draw (b) -- (y{\j}); 
    	    \draw (b) -- (z{\j});
    	    \draw (c) -- (z{\j}); 
    }

\end{tikzpicture}}
        \end{minipage}
    \end{center} 
    \caption{Positively curved infinite planar graphs}
    \label{fig:infinite}
\end{figure}
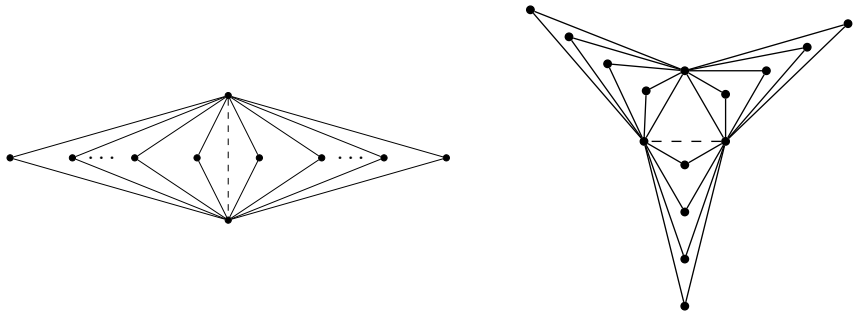

\textbf{Notation:} For convenience, in the remaining of the paper, all subsequent appearances of $\kappa$ are referring to the Lin-Lu-Yau Ricci curvature. We also use $\gamma(x,y)$ to denote $|\Gamma(x,y)|$, i.e., the number of common neighbors of $x,y$. Moreover, let $[a,b] = \{ i\in \mathbb{Z}: a\leq i\leq b\}$.

\begin{section}{Proof of Theorem \ref{thm:bounded-degree}}

\begin{lemma}\label{lem:y_prop}
Let $G$ be a simple positively curved planar graph and $xy$ be an edge with $d(x) \geq d(y)$. Suppose $S \subseteq \Gamma(y)\backslash \{x\}$, $|S| = s$, and $|S \cap \Gamma(x)| = k$.  Then 
$$|\Gamma(S) \cap \Gamma(x)| > \frac{s}{d(y)}d(x) - (k+1+|\Gamma(x,y)|) + |\Gamma(S)\cap \Gamma(x,y)|.$$

\end{lemma}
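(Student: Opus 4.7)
The plan is to apply the Laplacian characterization (Theorem~\ref{thm:curvature_laplacian}): since $d(x,y)=1$ and $\kappa(x,y)>0$, for every $f\in Lip(1)$ with $f(y)-f(x)=1$ one has $\Delta f(x)>\Delta f(y)$. I will exhibit a single such $f$, tailored to $S$, for which the resulting inequality is precisely the claim.

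Define $f$ on $A:=\{x,y\}\cup\Gamma(x)\cup\Gamma(y)$ by setting $f\equiv 1$ on $T_+:=\{y\}\cup S$, $f\equiv -1$ on $T_-:=(\Gamma(x)\setminus\Gamma(y)\setminus\{y\})\setminus\Gamma(S)$, and $f\equiv 0$ on the remaining vertices of $A$ (in particular $f(x)=0$); then extend to all of $V$ via the McShane--Whitney $1$-Lipschitz extension. The only $A$-pairs at which $f$ could violate the $1$-Lipschitz condition are those in $T_+\times T_-$, where $|f(a)-f(a')|=2$; but by construction a vertex of $T_-$ lies in $\Gamma(x)\setminus\Gamma(y)$ and is outside $\Gamma(S)$, so it is adjacent to neither $y$ nor any element of $S$. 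Hence every such pair has graph distance at least $2$, and $f$ is indeed $1$-Lipschitz on $A$.

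Now partition $\Gamma(x)=\{y\}\sqcup\Gamma(x,y)\sqcup(\Gamma(x)\setminus\Gamma(y)\setminus\{y\})$ and $\Gamma(y)=\{x\}\sqcup S\sqcup(\Gamma(y)\setminus S\setminus\{x\})$, and write $\gamma:=\gamma(x,y)$ and $r:=|\Gamma(S)\cap(\Gamma(x)\setminus\Gamma(y)\setminus\{y\})|$. A direct count of the labels gives
\[
\sum_{u\in\Gamma(x)} f(u) \;=\; 2+k+\gamma+r-d(x), \qquad \sum_{v\in\Gamma(y)} f(v) \;=\; s.
\]
Substituting these into $\Delta f(x)>\Delta f(y)$, and using $f(x)=0,\, f(y)=1$, simplifies to $(2+k+\gamma+r)\,d(y) > s\,d(x)$.

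Finally, assuming $s\geq 1$ (the case $s=0$ is immediate since the RHS of the lemma is then $-(1+\gamma)<0$), we have $y\in\Gamma(S)\cap\Gamma(x)$, and the same partition of $\Gamma(x)$ applied to $\Gamma(S)\cap\Gamma(x)$ yields $|\Gamma(S)\cap\Gamma(x)| = 1 + |\Gamma(S)\cap\Gamma(x,y)| + r$. Solving for $r$, substituting, and dividing by $d(y)$ gives the claimed inequality. The main obstacle is engineering $f$ so that the two neighbourhood sums isolate exactly the four parameters in the statement; the key driving observation is that the $-1$ label on $T_-$ is compatible with $1$-Lipschitz precisely because $T_-$ avoids $\Gamma(S)$, and this ``$-1$-defect'' is what produces the $r$-term that can then be re-expressed as $|\Gamma(S)\cap\Gamma(x)|-|\Gamma(S)\cap\Gamma(x,y)|-1$.
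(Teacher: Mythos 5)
Your proof is correct and takes essentially the same route as the paper: both construct the same $\{-1,0,1\}$-valued test function (your $T_-$ coincides with the paper's set $\Gamma(x)\backslash(\Gamma(S)\cup\Gamma(y))$ once $s\geq 1$, and the $s=0$ case is vacuous) and feed it into Theorem \ref{thm:curvature_laplacian} on the edge $xy$. The only differences are presentational: you argue directly rather than by contradiction, extend $f$ by McShane--Whitney instead of by zero, and make the bookkeeping of the term $r=|\Gamma(S)\cap\Gamma(x)|-1-|\Gamma(S)\cap\Gamma(x,y)|$ explicit.
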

\begin{proof}[Proof of Lemma \ref{lem:y_prop}]
Suppose not. Define a 1-Lipschitz function $f$ as follows:

\[ f(u) =   \begin{cases} 
                       -1 & u \in \Gamma(x)\backslash (\Gamma(S)\cup \Gamma(y)), \\
                        1 & \textrm{$u=y$ or $u\in S$,}\\
                        0 & \textrm{otherwise.}
            \end{cases}   
\]
  Then by Theorem \ref{thm:curvature_laplacian}, 
\begin{align*}
    \kappa(x,y) & \leq \nabla_{xy} \Delta f \\ 
                & \leq \Delta f(x) - \Delta f(y) \\
                & \leq \frac{k+1 - \lp d(x)-|\Gamma(x,y)|-|\Gamma(S)\cap \Gamma(x)| + |\Gamma(S)\cap \Gamma(x,y)|\rp }{d(x)} -\lp -\frac{d(y)-s}{d(y)} \rp\\
                & \leq 0, 
\end{align*}
which contradicts that $G$ is a positively curved graph.
\end{proof}



\begin{proof}[Proof of Theorem \ref{thm:bounded-degree}]

Let $t$ be the maximum degree and
$x$ be a max-degree vertex in $G$ and $\Gamma(x) = \{v_0, v_1, \cdots, v_{t-1}\}$ be the neighbors of $x$ listed in the clockwise order of a planar drawing of $G$. Assume for the sake of contradiction that $d(x) \geq 18$. For convenience, assume $v_{\ell} \equiv v_{\ell \textrm{ mod } t}$. Given two vertices $v_k$ and $v_{k+\ell}$, if $v_k v_{k+\ell} \in E(G)$, we say $v_k v_{k+\ell}$ forms an \textit{$\ell$-arc}; we say $v_k v_{k+\ell}$ forms an $\ell$-cap if either $v_k v_{k+\ell}$ forms an $\ell$-arc or there exists some vertex $z \in V(G)\backslash \{x\}$ such that $v_k z , v_{k+\ell} z \in E(G)$ but $v_{k+i} z \notin E(G)$ for all $i\in [1,\ell-1]$.


\begin{claim}\label{cl:no_close_triangle}
Suppose $v_k v_{k+\ell}$ forms an $\ell$-cap for some $\ell \geq 3$. Then there exists some $i \in \{k, k+1, \cdots, k+ \ell-2\}$ such that $v_i v_{i+2}$ forms a $2$-cap. In addition, $\Gamma(v_{i+1}, x) =\{v_i, v_{i+2}\}$. 
\end{claim}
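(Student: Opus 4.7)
My plan is an extremal (innermost-cap) argument driven by the planarity of the embedding. Let $R$ be the closed planar region bounded by the $\ell$-cap $v_kv_{k+\ell}$ together with the edges $xv_k$ and $xv_{k+\ell}$. Since $v_0,\ldots,v_{t-1}$ appear in clockwise order around $x$, the only vertices of $\Gamma(x)$ lying in $R$ are $v_k,v_{k+1},\ldots,v_{k+\ell}$, so every common neighbor of $x$ and any interior vertex $v_{k+j}$ (with $j\in[1,\ell-1]$) already belongs to $\{v_k,\ldots,v_{k+\ell}\}$. Among all pairs $(a,b)$ with $k\le a<b\le k+\ell$, $b-a\ge 2$, and $v_av_b$ forming a $(b-a)$-cap, I would then pick one whose length $m:=b-a$ is minimum; such a pair exists because $(k,k+\ell)$ qualifies, so $2\le m\le \ell$, and I let $R'$ denote the corresponding subregion enclosed by this minimal cap together with $xv_a$ and $xv_b$.

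I would first prove $m=2$. Suppose for contradiction $m\ge 3$, and consider the planar face inside $R'$ that is incident to the $m$-cap; its inner boundary is a walk $W$ from $v_b$ back to $v_a$ through vertices of $R'$. Because the $\Gamma(x)$-chords in $R'$ form a non-crossing diagram along the cyclic order $v_a,v_{a+1},\ldots,v_b$ around $x$, traversing $W$ forces two indices $a\le s<t\le b$ with $2\le t-s\le m-1$ for which either $v_sv_t$ is an edge of $W$ (a $(t-s)$-arc) or $W$ passes from $v_s$ to $v_t$ through a single non-$\Gamma(x)$ vertex $z$ witnessing a $(t-s)$-cap; either alternative contradicts the minimality of $m$. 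Once $m=2$, setting $i:=a$ gives a 2-cap $v_iv_{i+2}$. The subregion $R''$ enclosed by $v_iv_{i+2}$ together with $xv_i,xv_{i+2}$ contains $v_{i+1}$ as its only interior $\Gamma(x)$-vertex, so the planarity observation above immediately yields $\Gamma(v_{i+1},x)\subseteq\{v_i,v_{i+2}\}$. For the reverse inclusion I would analyze the two wedge faces of $R''$ between $xv_i,xv_{i+1}$ and between $xv_{i+1},xv_{i+2}$: if either $v_iv_{i+1}$ or $v_{i+1}v_{i+2}$ were missing, the corresponding wedge would be bounded by a path through a non-$\Gamma(x)$ vertex $w$, and $w$ together with the 2-cap would generate a different short-cap configuration violating the extremal choice of $(a,b)$ (or, after swapping to the symmetric side, of $i$), yielding $\Gamma(v_{i+1},x)=\{v_i,v_{i+2}\}$ as required.

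The main obstacle I anticipate is the reduction to $m=2$. Extracting a strictly shorter cap from the innermost-cap region is delicate when the walk $W$ dips into non-$\Gamma(x)$ vertices, because one must certify that such an intermediate vertex $z$ satisfies the required non-adjacency condition $v_{s+i}z\notin E(G)$ for every $i\in[1,t-s-1]$ before $z$ can serve as a valid witness for a $(t-s)$-cap. Establishing this will likely hinge on the non-crossing planar chord structure of the $\Gamma(x)$-chords inside $R'$ combined with the hypothesis $\delta(G)\ge 3$ to ensure that each interior $v_{a+j}$ has enough external neighbors to force the correct adjacency pattern along $W$.
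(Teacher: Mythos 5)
Your extremal choice of an innermost (minimum-length) cap matches the paper's starting point, but the core of your argument --- deducing $m=2$ from the non-crossing structure of the $\Gamma(x)$-chords and the boundary walk $W$ of a face inside $R'$ --- has a genuine gap: planarity together with $\delta(G)\geq 3$ does not force a shorter cap inside an $\ell$-cap, so no purely topological traversal of $W$ can succeed. Concretely, take $\ell=3$ with cap witness $z\sim v_a,v_{a+3}$, let $v_{a+1}\sim x,v_{a+2},w$ and $v_{a+2}\sim x,v_{a+1},w$ with $w\sim v_{a+1},v_{a+2},z$, and let $v_a,v_{a+3}$ take their third neighbors outside the region. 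This local configuration is planar, respects minimum degree $3$, contains a $3$-cap, and contains no $2$-cap $v_iv_{i+2}$ at all: neither $v_a,v_{a+2}$ nor $v_{a+1},v_{a+3}$ has a qualifying common neighbor. It is excluded only because it violates positive curvature once $d(x)\geq 18$; for instance, applying Lemma \ref{lem:y_prop} to the edge $xv_{a+1}$ with $S=\{v_{a+2},w\}$ forces $|\Gamma(S)\cap\Gamma(x)|>\frac{2}{3}\cdot 18-3+1=10$, whereas in the configuration $|\Gamma(S)\cap\Gamma(x)|=2$.

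This is exactly where the paper's proof departs from yours: each reduction step (ruling out $j-i\geq 4$, then $j-i=3$, then pinning down $\Gamma(v_{i+1})$) is a quantitative application of Lemma \ref{lem:y_prop} --- that is, of the positive-curvature hypothesis via a Lipschitz test function --- combined with the standing assumption $d(x)\geq 18$, not a consequence of the planar chord diagram. Your proposal never invokes the curvature condition at the decisive moment; indeed you attribute the resolution of your own stated obstacle to ``the non-crossing planar chord structure \ldots combined with the hypothesis $\delta(G)\geq 3$,'' which the example above shows cannot suffice. The same issue affects your final step: the equality $\Gamma(v_{i+1},x)=\{v_i,v_{i+2}\}$ is likewise obtained in the paper by a curvature computation (in fact the stronger statement $\Gamma(v_{i+1})=\{x,v_i,v_{i+2}\}$), not by inspecting the wedge faces.
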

\begin{proof}[Proof of Claim \ref{cl:no_close_triangle}]
WLOG assume that $k, k+\ell \in [0,t-1]$. Let $i,j$ be picked (among all pairs in $[k,k+\ell]$) such that $v_i v_j$ forms a $(j-i)$-cap, $j-i$ is at least $2$ and minimal.

We first show that $j-i\leq 3$. If $j-i\geq 4$, then by the minimality of $\{i,j\}$, $\gamma(x,v_{i+2}) = |\Gamma(x, v_{i+2})| \leq 2$. In particular, $\Gamma(v_{i+2,x}) \subseteq \{v_{i+1}, v_{i+3}\}$.
Consider $S = \Gamma(v_{i+2})\backslash \{x\}$. By Lemma \ref{lem:y_prop}, 
\begin{align*}
    |\Gamma(S)\cap \Gamma(x)| &> \frac{|S|}{d(v_{i+2})} d(x) -(2\gamma(x,v_{i+2})+1) + |\Gamma(x, v_{i+2}) \cap \Gamma(S)| \\
                             & \geq \frac{2}{3}d(x) -5.
\end{align*}
It follows that $|\Gamma(S)\cap \Gamma(x)| \geq 8$ since $d(x) \geq 18$. Again, by the minimality of $j-i$, there must exist a vertex $u \in \Gamma(v_{i+2})\backslash \Gamma(x)$ such that $u$ is adjacent to $v_{t}$ for some $t\geq i+7$. Since $v_{i+2} v_t$ do not form a $2$-cap by the minimality of $j-i$, it follows that $u$ is adjacent to every vertex in $\{v_{i+2}, v_{i+3}, \cdots, v_{i+6}\}$. Now applying Lemma \ref{lem:y_prop} on the edge $x v_{i+4}$ by setting $S' = \Gamma(v_{i+4}) \backslash \{x,u\}$, we obtain that 
\begin{align*}
    |\Gamma(S')\cap \Gamma(x)| &> \frac{|S'|}{d(v_{i+4})} d(x) -(2\gamma(x,v_{i+4})+1) + |\Gamma(x, v_{i+4}) \cap \Gamma(S')| \\
                               & \geq \min\{\frac{1}{3}d(x)-3, \frac{2}{4}d(x)-5\},
\end{align*}
which implies that  $|\Gamma(S')\cap \Gamma(x)|\geq 4$ since $d(x)\geq 18$. However, observe that $\Gamma(S')\cap \Gamma(x)\subseteq \{v_{i+2}, v_{i+4}, v_{i+6}\}$, which gives us a contradiction. Hence by contradiction $|j-i| \leq 3$.




Next we show that $j-i=2$. Suppose otherwise that $j-i = 3$. Let $S = \Gamma(v_{i+1})\backslash\{x, v_i\}$. 
If $d(v_{i+1}) \geq 4$, then by Lemma \ref{lem:y_prop} we have 
$$\frac{2}{4}d(x)- 4 < |\Gamma(S)\cap \Gamma(x)| \leq 4,$$ which gives a contradiction since $d(x) \geq 18$. Otherwise $d(v_{i+1}) = 3$. 
Now if $S = \{v_{i+2}\}$, then by the minimality of $\{i,j\}$,
$$ \frac{1}{3} d(x) -4 < |\Gamma(S)\cap \Gamma(x)| \leq 2, $$
which gives a contradiction since $d(x) \geq 18$. Otherwise $S = \{u\}$, for some $u \notin \Gamma(x)$. Then Lemma \ref{lem:y_prop} implies that
$$|\Gamma(S)\cap \Gamma(x)| > \frac{1}{3} d(x) -2 \geq 4,$$
i.e., $|\Gamma(S)\cap \Gamma(x)| \geq 5$ since $d(x)\geq 18$. However, observe that since $\{v_i, v_j\}$ forms a $2$-cap, $|\Gamma(S)\cap \Gamma(x)| \leq j-i + 1 = 4$, which gives us a contradiction.

Hence we have $j-i = 2$. Similar reasoning gives that $\Gamma(v_{i+1}) = \{x, v_i, v_j\}$. This completes the proof of Claim \ref{cl:no_close_triangle}.

\end{proof}


\begin{claim}\label{cl:no_internal_2}
Suppose $v_k v_{k+\ell}$ forms an $\ell$-cap for some $4 \leq \ell \leq 8$. Then there does not exist $i,j \in [k+1, k+\ell-1]$ such that $v_i v_j$ forms a $(j-i)$-cap and $j-i\geq 2$.
\end{claim}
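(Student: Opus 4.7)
The plan is to assume, for contradiction, the existence of an internal $(j-i)$-cap with $j-i\ge 2$ and $i,j\in[k+1,k+\ell-1]$, reduce to the existence of a 2-cap $v_sv_{s+2}$ strictly inside the outer $\ell$-cap via Claim \ref{cl:no_close_triangle}, and then derive a contradiction by applying Lemma \ref{lem:y_prop} to the edge $xv_{s+1}$ and comparing its lower bound with an upper bound coming from planarity.

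First I would produce the inner 2-cap. If $j-i=2$, then $v_iv_j$ is itself a 2-cap and the last paragraph of the proof of Claim \ref{cl:no_close_triangle} gives $\Gamma(v_{i+1})=\{x,v_i,v_j\}$; set $s=i$. Otherwise $j-i\in[3,6]$, and applying Claim \ref{cl:no_close_triangle} directly to the $(j-i)$-cap $v_iv_j$ (whose length is at least $3$) yields an index $s\in[i,j-2]$ with $v_sv_{s+2}$ a 2-cap and $\Gamma(v_{s+1})=\{x,v_s,v_{s+2}\}$. In both cases $s\ge k+1$ and $s+2\le k+\ell-1$, so the 2-cap sits strictly inside the outer $\ell$-cap.

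Next I would apply Lemma \ref{lem:y_prop} to the edge $xv_{s+1}$ with $S=\Gamma(v_{s+1})\setminus\{x\}=\{v_s,v_{s+2}\}$. Since $d(v_{s+1})=3$, $|S|=2$, $|S\cap\Gamma(x)|=2$, $\gamma(x,v_{s+1})=2$, and $d(x)\ge 18$, the lemma gives
\[|\Gamma(S)\cap\Gamma(x)|>\tfrac{2}{3}(18)-(2+1+2)+|\Gamma(S)\cap\{v_s,v_{s+2}\}|,\]
so $|\Gamma(S)\cap\Gamma(x)|\ge 8$ in general, and $\ge 10$ if $v_sv_{s+2}$ is itself an edge. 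On the other hand, since $v_s$ and $v_{s+2}$ lie strictly between $v_k$ and $v_{k+\ell}$ in the cyclic order of $\Gamma(x)$, planarity together with the outer $\ell$-cap forces every neighbor of $v_s$ or $v_{s+2}$ that also lies in $\Gamma(x)$ to belong to $\{v_k,v_{k+1},\dots,v_{k+\ell}\}$; removing $v_s$ and $v_{s+2}$ themselves leaves at most $\ell-1\le 7$ candidates, contradicting the lower bound above.

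The main obstacle will be making the planarity bound $\Gamma(v_s)\cap\Gamma(x),\Gamma(v_{s+2})\cap\Gamma(x)\subseteq\{v_k,\dots,v_{k+\ell}\}$ rigorous. One has to distinguish the two realizations of the outer cap (as the arc $v_kv_{k+\ell}$ or via an external vertex $z$), arguing in each case that any edge exiting the cap region would cross a radial edge $xv_k$ or $xv_{k+\ell}$ or the cap boundary; in the external-vertex case one must further rule out the possibility that $z$ itself lies in $\Gamma(x)$, using either the cap definition's condition $v_{k+i}z\notin E(G)$ for $i\in[1,\ell-1]$ or a direct planarity argument. Once this geometric claim is established, the numerical contradiction is immediate for $\ell\le 8$.
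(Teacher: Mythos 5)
Your proposal is correct and follows essentially the same route as the paper: reduce to an interior $2$-cap $v_sv_{s+2}$ with $\Gamma(v_{s+1})=\{x,v_s,v_{s+2}\}$ via Claim \ref{cl:no_close_triangle}, apply Lemma \ref{lem:y_prop} to $xv_{s+1}$ with $S=\{v_s,v_{s+2}\}$ to get $|\Gamma(S)\cap\Gamma(x)|\geq 8$ (or $\geq 10$ if $v_sv_{s+2}\in E$), and contradict the planarity containment $\Gamma(S)\cap\Gamma(x)\subseteq\{v_k,\dots,v_{k+\ell}\}$. You are in fact somewhat more careful than the paper about the $j-i=2$ base case and about justifying the containment (including why the external cap vertex $z$ cannot contribute), but these are elaborations of the same argument rather than a different one.
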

\begin{proof}[Proof of Claim \ref{cl:no_internal_2}]
Suppose for the sake of contradiction that there exist $i,j\in [k+1, k+\ell-1]$ such that $v_i v_j$ is a cap and $j-i\geq 2$. By Claim \ref{cl:no_close_triangle}, there exists some $i',j' \in [i,j]$ such that $v_i'v_j'$ forms a $2$-cap and $\Gamma(v_{i'+1}) = \{v_{i'},v_{j'},x\}$. Consider the edge $x v_{i'+1}$ and apply Lemma \ref{lem:y_prop} with $S = \Gamma(v_{i'+1})\backslash \{x\}$. It follows that 
$$|\Gamma(S)\cap \Gamma(x)| > \frac{|S|}{d(v_{i'+1})} d(x) - 5 + |\Gamma(S)\cap \Gamma(x, v_{i'+1})|,$$
which implies that $|\Gamma(S)\cap \Gamma(x)| \geq 10$ if $v_{i'} v_{j'}\in E(G)$ and $|\Gamma(S)\cap \Gamma(x)| \geq 8$ otherwise. In both cases, it contradicts that $\ell \leq 8$ since $\Gamma(S)\cap \Gamma(x) \subseteq \{v_s: s\in [k,k+\ell]\}$.
\end{proof}

\begin{claim}\label{cl:no_large_closure}
There exist no $i,j$ such that $v_i v_j$ forms a $\ell$-cap where $\ell \notin \{1,2,4\}$.
\end{claim}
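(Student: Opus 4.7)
The plan is to rule out each forbidden length $\ell \in \{3\}\cup \{5,6,7,\ldots\}$ by contradiction, invoking Claim \ref{cl:no_close_triangle} to extract a degree-three tip vertex $v_{i+1}$ with $\Gamma(v_{i+1}) = \{x, v_i, v_{i+2}\}$ inside the cap, and then playing this structure off against Claim \ref{cl:no_internal_2} or Lemma \ref{lem:y_prop}. In every case the common mechanism is: Lemma \ref{lem:y_prop} applied at the edge $xv_{i+1}$ with $S = \{v_i, v_{i+2}\}$ and $d(v_{i+1}) = 3$ forces $|\Gamma(S)\cap\Gamma(x)| > \tfrac{2}{3}d(x) - O(1) \geq 7$ once $d(x) \geq 18$, while the cap structure confines $\Gamma(S)\cap\Gamma(x)$ to a short arc of the cyclic sequence $v_0,\ldots,v_{t-1}$, giving a numerical contradiction.

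For $\ell = 3$, Claim \ref{cl:no_close_triangle} produces a $2$-cap $v_iv_{i+2}$ with $i \in \{k, k+1\}$; the enveloping $3$-cap, together with planarity, limits the available positions for $\Gamma(S)\cap\Gamma(x)$ to a very small arc, which is too tight against the $\geq 7$ lower bound coming from Lemma \ref{lem:y_prop}. For $5 \leq \ell \leq 8$, the located $2$-cap from Claim \ref{cl:no_close_triangle} either sits strictly inside the $\ell$-cap (in which case Claim \ref{cl:no_internal_2} fires directly) or shares an endpoint with the $\ell$-cap (in which case the same Lemma \ref{lem:y_prop} computation at the tip gives $|\Gamma(S)\cap\Gamma(x)| \geq 7$, which overfills the confining arc of length at most $\ell + 1 \leq 9$ after subtracting the slots already blocked by the tip's known neighborhood).

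For $\ell \geq 9$, Claim \ref{cl:no_internal_2} no longer applies and the naive confining arc has length $\ell+1 \geq 10$, so the $\geq 7$ bound does not immediately contradict. Here I would iterate Claim \ref{cl:no_close_triangle} inside each of the two sub-caps flanking the located $2$-cap, generating additional degree-three tips and hence additional Lemma \ref{lem:y_prop} constraints that jointly overfill the arc; alternatively, I would apply Lemma \ref{lem:y_prop} at edges $xv_j$ for $j$ nearer the middle of the $\ell$-cap with larger sets $S$, imitating the $j-i \geq 4$ computation from the proof of Claim \ref{cl:no_close_triangle} to obtain strictly stronger lower bounds. The principal obstacle is precisely this $\ell \geq 9$ regime: making the descent or the enlarged-$S$ argument precise while tracking which cap positions are already constrained by earlier steps, and ensuring that the planarity-derived restrictions on common neighbors do not overlap in a way that weakens the counting, is the technical core of the proof.
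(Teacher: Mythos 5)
Your outline handles $\ell=3$ and the cases where Claim \ref{cl:no_internal_2} applies in essentially the paper's spirit, but there is a genuine gap, and you have located it yourself: the $\ell\geq 9$ regime is left unresolved, and the two fixes you sketch are not the ones that work. The paper's key idea, which your proposal misses, is to run a \emph{strong induction} on $\ell$ and to exploit the induction hypothesis not to shrink the confining arc but to sparsify it: once no $s$-cap exists for any $s\in[\ell-1]\setminus\{1,2,4\}$, a vertex $v_i$ lying strictly inside the $\ell$-cap can only have neighbors in $\Gamma(x)$ at index offsets $\pm 1,\pm 2,\pm 4$, so for an internal $2$-cap $v_iv_{i+2}$ the set $\lp\Gamma(v_i)\cup\Gamma(v_{i+2})\rp\cap\Gamma(x)$ has at most $9$ elements (at most $7$ if $v_iv_{i+2}\notin E(G)$) \emph{independently of $\ell$}. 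Lemma \ref{lem:y_prop} with $S=\{v_i,v_{i+2}\}$ forces this set to have at least $10$ (resp.\ $8$) elements, a contradiction that works uniformly for all $\ell\geq 6$. Your plan instead treats the confining arc as having length $\ell+1$ and tries to overfill it, which is why you get stuck at $\ell\geq 9$; neither ``iterating Claim \ref{cl:no_close_triangle} in the flanking sub-caps'' nor ``taking larger $S$ near the middle'' is developed enough to close this, and it is not clear either would.

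A second, related problem is your treatment of the shared-endpoint case for $5\leq\ell\leq 8$. If the $2$-cap produced by Claim \ref{cl:no_close_triangle} is $v_kv_{k+2}$ with $v_k$ an endpoint of the $\ell$-cap, then $\Gamma(v_k)\cap\Gamma(x)$ is \emph{not} confined to the arc $\{v_k,\dots,v_{k+\ell}\}$ (only vertices strictly interior to the cap have their neighborhoods trapped by planarity), so your ``confining arc of length at most $\ell+1$'' count does not apply to $S=\{v_k,v_{k+2}\}$. The paper avoids this by a separate base case at $\ell=5$ (showing $v_{k+2}$ would need a neighbor $v_{k+4}$ or $v_{k+5}$, contradicting Claim \ref{cl:no_internal_2} or the nonexistence of $3$-caps) and, in the inductive step, by an explicit chase showing one can always relocate to a $2$-cap $v_iv_{i+2}$ with $i\in[k+1,k+\ell-3]$, i.e.\ with both endpoints interior, before doing the final count. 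You would need to supply both of these pieces to make the argument complete.
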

\begin{proof}[Proof of Claim \ref{cl:no_large_closure}]
We first show that there does not exist a $3$-cap. Suppose on the contrary that there is some $3$-cap $v_i v_j$ such that $i,j \in [0,t-1]$ and $j-i =3$. By Claim \ref{cl:no_close_triangle}, there exists some $i',j' \in [i,j]$ such that $v_{i'}v_{j'}$ is a $2$-cap. WLOG, suppose $v_i v_{i+2}$ is a $2$-cap and it follows from Claim \ref{cl:no_close_triangle} that $\Gamma(v_{i+1}) = \{x, v_i, v_{i+2}\}$. Now consider the edge $x v_{i+1}$. Applying lemma \ref{lem:y_prop} with $S = \{v_{i+2}\}$, we have that 
$$|\Gamma(v_{i+2})\cap \Gamma(x)|= |\Gamma(S)\cap \Gamma(x)| > \frac{1}{d(v_{i+1})} d(x)- 4 + |\Gamma(x, v_{i+1}) \cap \Gamma(v_{i+2})|,$$
which implies that $|\Gamma(v_{i+2})\cap \Gamma(x)|\geq 3 + |\Gamma(x, v_{i+1}) \cap \Gamma(v_{i+2})|$ since $d(x) \geq 18$. This contradicts that $|j-i| = 3$ since $\Gamma(v_{i+2})\cap \Gamma(x) \subseteq \{v_i, v_{i+1}, v_{i+3}\}$.

Now we will show by (strong) induction that there exist no $i,j$ such that $v_i v_j$ is a $\ell$-cap for $\ell \notin \{1,2,4\}$. Again WLOG we assume that $i,j\in [0,t-1]$ and $i<j$. For the base case, suppose $j-i=5$. By Claim \ref{cl:no_close_triangle}, there exists some $i',j'\in [i,j]$ such that $v_{i'} v_{j'}$ is a $2$-cap. By Claim \ref{cl:no_internal_2}, $\{i,j\}\cap \{i',j'\} \neq \emptyset$. WLOG, assume that $i' = i$ and thus $j' = i+2$. Similar to before, 
$$|\Gamma(v_{i+2})\cap \Gamma(x)| \geq 3 + |\Gamma(x, v_{i+1}) \cap \Gamma(v_{i+2})|.$$
Hence $v_{i+2}$ is adjacent to either $v_{i+4}$ or $v_{i+5}$. The former case contradicts Claim \ref{cl:no_internal_2} and the later case contradicts that there is no $3$-cap. Hence by contradiction, we obtain that there exists no $5$-cap.

Now assume that there exists no $s$-cap for all $s \in [\ell-1]\backslash \{1,2,4\}$ for $\ell \geq 6$. We want to show that there exists no $\ell$-cap. Suppose on the contrary that there exists some $k$ such that $v_k v_{k+\ell}$ forms an $\ell$-cap. WLOG, assume that $v_k, v_{k+\ell} \in [0,t-1]$. 

We now claim that there exists some $i \in \{k+1, k+2, \cdots, k+\ell-3\}$ such that $v_i v_{i+2}$ forms a $2$-cap. By Claim \ref{cl:no_close_triangle}, there must exist a $2$-cap $v_i v_{i+2}$ within the $\ell$-cap $v_k v_{k+\ell}$. If $i \in [k+1, k+\ell-3]$, we are done. Otherwise assume that $i = k$ and it follows that $\Gamma(v_{k+1}) = \{x, v_k, v_{k+2}\}$. Applying Lemma \ref{lem:y_prop} on the edge $xv_{k+1}$ with $S = \{v_{k+2}\}$, we obtain that 

$$|\Gamma(v_{k+2},x)|  > \frac{1}{3} d(x) -4 + |\Gamma(x, v_{k+1}) \cap \Gamma(v_{k+2})|.$$
Since $d(x) \geq 18$, it follows that $v_{k+2}$ has at least $2$ neighbors $v_j \in \Gamma(x)$ with $j > k+2$. This then implies that $v_{k+2}$ has at least one neighbor $v_j$ with $j \geq k+4$. By the induction hypothesis, there exists no $s$-cap for $s \in [\ell-1] \backslash \{1,2,4\}$. It follows that either $j = k+4$ or $j = k+6$. If $j=k+4$, then $v_{k+2} v_{k+4}$ forms a $2$-cap and we are done. Otherwise $v_{k+2} v_{k+6}$ forms a $4$-cap. Now Claim \ref{cl:no_close_triangle} and Claim \ref{cl:no_internal_2} imply that there must exist a $2$-cap within the $4$-cap $v_{k+2} v_{k+6}$ that is either $v_{k+2} v_{k+4}$ or $v_{k+4} v_{k+6}$. If $v_{k+2} v_{k+4}$ is a cap, we are done. Otherwise $v_{k+4} v_{k+6}$ is a $2$-cap and $\Gamma(v_{k+5},x) = \{v_{k+4}, v_{k+6}\}$. Similar to before, applying Lemma \ref{lem:y_prop} on the edge $x v_{k+5}$ with $S = \{v_{k+4}\}$, we obtain that  $v_{k+4}$ must have two neighbors $v_j \in \Gamma(x)$ such that $j < k+4$. Since $v_{k+2} v_{k+6}$ forms a cap, it must follows that $v_{k+2} v_{k+4} \in E(G)$ and forms a $2$-cap. Thus we showed that there exists some $i \in [k+1, k+\ell-3]$ such that $v_i v_{i+2}$ forms a $2$-cap. Moreover, $\Gamma(v_{i+1},x) = \{v_i, v_{i+2}\}$.

Now applying Lemma \ref{lem:y_prop} on the edge $xv_{i+1}$ by setting $S = \{v_i, v_{i+2}\}$, we obtain that 
$$|\Gamma(S)\cap \Gamma(x)| > \frac{2}{3}d(x)-5+ |\Gamma(x,v_{i+1}) \cap \Gamma(S)|.$$

If $v_i v_{i+2} \in E(G)$, then it follows that $|\Gamma(S)\cap \Gamma(x)| > \frac{2}{3} \cdot 18 -5 + 2 \geq 10$. However, since there are no $s$-caps for any $s\in [\ell]\backslash\{1,2,4\}$, $|\lp\Gamma(v_i) \cup \Gamma(v_{i+2})\rp \cap \Gamma(x)| \leq 9$, which gives us a contradiction. On the other hand, if $v_i v_{i+2} \notin E(G)$, then it follows that $|\Gamma(S)\cap \Gamma(x)| > \frac{2}{3} \cdot 18 -5  \geq 8$. Similarly, since there are no $s$-caps for any $s\in [\ell]\backslash\{1,2,4\}$, $|\lp\Gamma(v_i) \cup \Gamma(v_{i+2})\rp \cap \Gamma(x)| \leq 7$, which again gives a contradiction. Therefore, by contradiction, there exists no $\ell$-cap. Claim \ref{cl:no_large_closure} then follows from induction.
\end{proof}

Now we are ready to complete the proof of Theorem \ref{thm:bounded-degree}. We first claim that $\Gamma(x)$ must be an independent set. Suppose otherwise that $v_{i} v_j \in E(G)$. Then observe that $v_i v_j$ forms both a $(j-i)$-cap and $(d(x)-(j-i))$-cap. At least one of them is an $\ell$-cap with $\ell \geq 8$, which does not exist by Claim \ref{cl:no_large_closure} since $d(x) \geq 18$. Hence $\Gamma(x)$ is an independent set.
We now claim that there must be a $2$-cap in the neighborhood of $x$. Suppose not. Then by Claim \ref{cl:no_close_triangle}, there does not exist any $\ell$-cap for any $\ell \geq 2$. Applying Lemma \ref{lem:y_prop} on the edge $x v_{0}$ with $S = \Gamma(v_0) \backslash \{x\}$, it follows that 

$$|\Gamma(S) \cap \Gamma(x)| > \frac{2}{3} d(x) - 1,$$
i.e., $|\Gamma(S) \cap \Gamma(x)| \geq 12$ since $d(x) \geq 18$.
Hence WLOG there exists some vertex $u \in \Gamma(v_0) \backslash \Gamma(x)$, such that $uv_j\in E(G)$ for some $j\geq 5$. Since there is no $\ell$-cap for any $\ell \geq 2$, it follows that $u v_i \in E(G)$ for all $i \in [0,5]$. Applying Lemma \ref{lem:y_prop} on the edge $x v_2$ with $S = \Gamma(v_2) \backslash \{u,x\}$, we have 

$$|\Gamma(S) \cap \Gamma(x)| > \frac{1}{3} d(x) -1 \geq 5,$$
which gives a contradiction since $\Gamma(S) \cap \Gamma(x) \subseteq \{v_1, v_2, v_3\}$. Hence there exists some $2$-cap $v_i v_{i+2}$ and $\Gamma(v_{i+1},x) = \{v_i, v_{i+2}\}$, which contradicts that $\Gamma(x)$ is independent. By contradiction, it follows that $\Delta(G) \leq 17$.

\end{proof}

\section{Proof of Theorem \ref{thm:min-degree3}}

Bourne et.al. \cite{BCLMP18} showed an ``integer-valuedness'' property of the optimal Kantorovich potential as follows:

\begin{lemma}\cite{BCLMP18}\label{lem:integer-valued}
Let $G = (V,E)$ be a locally finite graph. Let $x,y \in V$ with $x\sim y$. Let $p\in [0,1]$. Then there exists a $1$-lipschitz function $\phi$ such that 
$$W(m_x^p, m_y^p) = \dss_{w\in V} \phi(w)\lp m_x^p(w) - m_y^p(w) \rp,$$
and $\phi(w) \in \mathbb{Z}$ for all $w\in V$.
\end{lemma}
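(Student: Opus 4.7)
The plan is to pass to the dual LP and exploit integrality of the graph distance on the finite support of $m_x^p$ and $m_y^p$.

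First I would restrict attention to the finite set $N := \{x,y\}\cup\Gamma(x)\cup\Gamma(y)$, which contains the support of both $m_x^p$ and $m_y^p$. By the Kantorovich--Rubinstein duality recalled in the excerpt,
\[
W(m_x^p,m_y^p) \;=\; \sup_{f}\ \sum_{w\in N} f(w)\bigl(m_x^p(w)-m_y^p(w)\bigr),
\]
where the supremum ranges over $1$-Lipschitz functions $f:N\to\mathbb{R}$, i.e.\ functions satisfying $f(u)-f(v)\le d(u,v)$ for every $u,v\in N$. Setting $\rho := m_x^p - m_y^p$, note that $\sum_{w\in N}\rho(w)=0$, so the objective is invariant under adding a constant to $f$. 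I therefore fix a reference vertex $w_0\in N$ and impose $f(w_0)=0$, obtaining a bounded, non-empty polytope
\[
P \;=\; \bigl\{f\in\mathbb{R}^N : f(w_0)=0,\ f(u)-f(v)\le d(u,v)\ \forall u,v\in N\bigr\},
\]
bounded because $|f(w)|=|f(w)-f(w_0)|\le d(w,w_0)$.

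Since the objective is linear and $P$ is a bounded polytope, the supremum is attained at a vertex $f^{*}$ of $P$. Here comes the main point: at such a vertex, enough constraints of the form $f^{*}(u)-f^{*}(v)=d(u,v)$ become tight to pin down $f^{*}$ together with the normalization $f^{*}(w_0)=0$. The tight constraints, viewed as edges of a directed graph on $N$, span a connected subgraph containing $w_0$; hence for every $w\in N$ there is a path from $w_0$ to $w$ along which $f^{*}$ changes by $\pm d(\cdot,\cdot)$ at each step. Because graph distance on $N$ takes integer values and $f^{*}(w_0)=0$, propagation gives $f^{*}(w)\in\mathbb{Z}$ for all $w\in N$. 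This is the combinatorial heart of the argument, and I expect verifying the ``spanning tight constraints'' claim carefully (i.e.\ the LP basis argument) to be the main obstacle; it should follow from standard polyhedral theory since the coefficient matrix of the constraint system is the difference matrix of the complete digraph on $N$.

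Finally I would extend $f^{*}$ from $N$ to all of $V$ by the infimal convolution
\[
\phi(w) \;:=\; \min_{v\in N}\bigl(f^{*}(v)+d(v,w)\bigr),
\]
which is a $1$-Lipschitz function on $V$, agrees with $f^{*}$ on $N$ (by the $1$-Lipschitz property of $f^{*}$), and is integer valued because $f^{*}$ and $d$ are. Since $\rho$ is supported on $N$,
\[
\sum_{w\in V}\phi(w)\bigl(m_x^p(w)-m_y^p(w)\bigr) \;=\; \sum_{w\in N} f^{*}(w)\rho(w) \;=\; W(m_x^p,m_y^p),
\]
which is the desired identity, completing the proof.
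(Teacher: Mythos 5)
Your argument is correct, but note that the paper does not prove this lemma at all: it is imported as a black box from Bourne, Cushing, Liu, M\"unch and Peyerimhoff \cite{BCLMP18}, so there is no in-paper proof to compare against. What you have written is a self-contained proof along the standard lines (and essentially the route taken in the cited source): pass to the dual LP over the Lipschitz polytope, normalize at a base point $w_0$ so the feasible region $P$ becomes a bounded nonempty polytope, take an optimal vertex $f^{*}$, and observe that a vertex requires $|N|$ linearly independent active constraints. Your key combinatorial step checks out: the active difference constraints $e_u-e_v$ over a graph with $c$ components have rank $|N|-c$, and adding $e_{w_0}$ raises the rank to $|N|-c+1$, so full rank forces $c=1$; integrality of $f^{*}$ then propagates from $f^{*}(w_0)=0$ along tight constraints $f^{*}(u)-f^{*}(v)=\pm d(u,v)\in\mathbb{Z}$. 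The McShane-type extension $\phi(w)=\min_{v\in N}(f^{*}(v)+d(v,w))$ correctly produces a globally $1$-Lipschitz integer-valued function agreeing with $f^{*}$ on $N$, and since $m_x^p-m_y^p$ is supported on $N$ the optimal value is preserved. The only point worth making explicit if you write this up formally is that the supremum over $1$-Lipschitz functions on $V$ equals the supremum over $1$-Lipschitz functions on $(N,d_G|_N)$, which follows from exactly the extension you use at the end; with that remark included, the proof is complete.
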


Before we prove Theorem \ref{thm:min-degree3}, we show a simple lemma that gives a lower bound on the Ricci curvature of a positively curved graph with bounded degree.

\begin{lemma}\label{lem:curvature_max_degree}
Suppose $G$ is a positively curved graph with maximum degree $\Delta$. Then $\kappa(G) \geq \frac{1}{\Delta(\Delta-1)}$. 
\end{lemma}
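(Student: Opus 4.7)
My plan is to show that for every edge $xy \in E(G)$, the curvature $\kappa(x,y)$ is a positive integer multiple of $1/\mathrm{lcm}(d(x),d(y))$, then combine this with the elementary estimate $\mathrm{lcm}(a,b) \leq \Delta(\Delta-1)$ for $a, b \in \{1, \ldots, \Delta\}$ and $\Delta \geq 2$. The key step is to extract an integer-valued optimizer in the limit $\alpha \to 1$. Fix an edge $xy$. By Lemma~\ref{lem:integer-valued}, for each $\alpha \in [0,1)$ there is an integer-valued $1$-Lipschitz function $\phi_\alpha$ realizing the Kantorovich dual for $W(m_x^\alpha, m_y^\alpha)$. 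After normalizing $\phi_\alpha(x)=0$, the values of $\phi_\alpha$ on the finite set $N := \{x,y\} \cup \Gamma(x) \cup \Gamma(y)$ lie in $\{-2,-1,0,1,2\}$, since $d(x,w) \leq 2$ for $w \in N$. Along any sequence $\alpha_n \to 1^-$, by pigeonhole I pass to a subsequence on which $\phi_{\alpha_n}$ agrees on $N$ with a fixed integer-valued $\phi^*$. Since $\kappa(x,y) > 0$ and $\kappa_\alpha/(1-\alpha) \to \kappa(x,y)$, one needs $W(m_x^{\alpha_n}, m_y^{\alpha_n}) \to 1$, which forces $\phi^*(x) - \phi^*(y) = 1$. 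Setting $A := \sum_{w \in \Gamma(x)} \phi^*(w)$ and $B := \sum_{w \in \Gamma(y)} \phi^*(w)$ (both integers), a direct expansion of the Kantorovich dual yields $\kappa_{\alpha_n}(x,y) = (1-\alpha_n)\bigl(1 - A/d(x) + B/d(y)\bigr)$, and taking the limit gives
$$\kappa(x,y) = \frac{d(x)d(y) - A\,d(y) + B\,d(x)}{d(x)\,d(y)}.$$

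Every term in the numerator is divisible by $g := \gcd(d(x), d(y))$, so the fraction simplifies to a positive integer over $\mathrm{lcm}(d(x), d(y))$, yielding $\kappa(x,y) \geq 1/\mathrm{lcm}(d(x), d(y))$. The bound $\mathrm{lcm}(a,b) \leq \Delta(\Delta-1)$ for $a, b \in [1, \Delta]$ with $\Delta \geq 2$ follows from a two-case check: if $\gcd(a,b) = 1$ then either $a = b = 1$ or $a \neq b$ and $ab \leq \Delta(\Delta-1)$; if $\gcd(a,b) \geq 2$ then $\mathrm{lcm}(a,b) \leq ab/2 \leq \Delta^2/2 \leq \Delta(\Delta-1)$. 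Minimizing over all edges $xy$ then gives $\kappa(G) \geq 1/(\Delta(\Delta-1))$.

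The main obstacle is the passage from the $\alpha$-dependent integer-valued potentials to a single $\phi^*$ valid in the limit $\alpha \to 1$, so that the curvature formula from the Kantorovich dual survives. My pigeonhole argument is enough because only finitely many integer $1$-Lipschitz patterns fit on the bounded neighborhood $N$; a cleaner alternative is parametric LP theory, which shows that the optimal dual basis of the Wasserstein LP is piecewise constant in $\alpha$, hence eventually constant on some interval $[\alpha_0, 1)$ where the same integer-valued $\phi^*$ is optimal for all $\alpha$.
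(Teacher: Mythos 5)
Your proof is correct, and although it rests on the same key ingredient as the paper --- the integer-valuedness of an optimal Kantorovich potential (Lemma~\ref{lem:integer-valued}) forcing $\kappa(x,y)$ into a discrete set of rationals --- the execution is genuinely different. The paper drops to idleness $0$: it invokes $\kappa(x,y)\ge\kappa_0(x,y)=1-W(m_x^0,m_y^0)$ from \cite{BCLMP18} and observes that $1-W(m_x^0,m_y^0)$ is an integer multiple of $1/(d(x)d(y))$. You instead work at the $\alpha\to 1^-$ end, pigeonholing the finitely many integer $1$-Lipschitz patterns on $\{x,y\}\cup\Gamma(x)\cup\Gamma(y)$ (the only set that matters, since $m_x^\alpha-m_y^\alpha$ is supported there) to obtain a single potential $\phi^*$ valid along a subsequence; this yields the exact formula $\kappa(x,y)=1-A/d(x)+B/d(y)$ and hence membership in $\frac{1}{\mathrm{lcm}(d(x),d(y))}\mathbb{Z}$. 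Your route is longer but buys two concrete advantages: (i) it needs only positivity of $\kappa_{\mathrm{LLY}}(x,y)$ rather than of $\kappa_0(x,y)$, so there is no issue about whether the idleness-$0$ curvature is itself positive; and (ii) the per-edge bound $1/\mathrm{lcm}(d(x),d(y))$ is what actually delivers the stated constant: the paper's final inequality $\frac{1}{d(x)d(y)}\ge\frac{1}{\Delta(\Delta-1)}$ does not hold when $d(x)=d(y)=\Delta$ (it only gives $1/\Delta^2$ there), whereas in that case $\mathrm{lcm}(d(x),d(y))=\Delta\le\Delta(\Delta-1)$, so your version closes that case cleanly. Your concluding case analysis showing $\mathrm{lcm}(a,b)\le\Delta(\Delta-1)$ for $a,b\in[1,\Delta]$ is also correct (modulo the degenerate case $\Delta=1$, for which the statement itself is vacuous).
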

\begin{proof}
It was shown in \cite{BCLMP18} that for every $xy \in E(G)$,
$$\kappa(x,y) \geq \kappa_0(x,y) = 1 - W(m_x^0, m_y^0).$$
By Lemma \ref{lem:integer-valued}, there exists some integer valued $1$-Lipschitz function $\phi$ such that 
$$W(m_x^0, m_y^0) = \dss_{w\in V} \phi(w)\lp m_x^0(w) - m_y^0(w) \rp,$$
Observe that by definition of $m_x(\cdot)$, and $m_y(\cdot)$, 
$$|m_x^0(w) -m_y^0(w)| = \frac{c}{d(x)d(y)},$$
for some non-negative integer $c$.
Since $G$ is positive curved and $\phi(\cdot)$ is an integer-valued function, it then follows that $$\kappa(x,y) = 1 - W(m_x^0, m_y^0) \geq \frac{1}{d(x)d(y)} \geq \frac{1}{\Delta(\Delta-1)}.$$
\end{proof}

\begin{proof}[Proof of Theorem \ref{thm:min-degree3}]

Let $G$ be a positively curved planar graph with $\delta(G) \geq 3$. 
By Theorem \ref{thm:bounded-degree}, we obtain that $\Delta(G) \leq 17$. It then follows from Lemma \ref{lem:curvature_max_degree} that $\kappa_{LLY}(G)\geq \frac{1}{17 \cdot 16} = \frac{1}{272}$. By Lemma \ref{lem:diam}, $diam(G) \leq \frac{2}{\kappa_{LLY}(G)} \leq 544$. It then follows trivially that 
$$|V(G)|\leq 1+\sum_{i=1}^{diam(G)-1} \Delta(G)\cdot (\Delta(G)-1)^{i} <17^{544}.$$

\end{proof}
\end{section}

\end{document}